\documentclass[12pt]{article}
\usepackage{amsmath,amsthm,bbm,amssymb}

\usepackage[all]{xy}

\usepackage[active]{srcltx}

\newtheorem*{theorem}{Theorem}
\newtheorem*{lemma}{Lemma}

\theoremstyle{definition}
\newtheorem*{example}{Example}

\theoremstyle{remark}

\newcommand{\CC}{\mathbb C}

\DeclareMathOperator{\rank}{rank}

\newcommand{\lin}{\,\frac{\quad\ }{\quad\ }\,}
\newcommand{\arr}{\longrightarrow}

\begin{document}
\title{Topological classification of systems of bilinear and sesquilinear forms\thanks{Published in: Linear Algebra Appl. 515 (2017) 1--5.}}

\author{Carlos M. da Fonseca\thanks{Department of Mathematics, Kuwait University, Kuwait, {carlos@sci.kuniv.edu.kw}}
\and
Vyacheslav Futorny\thanks{Department of Mathematics, University of S\~ao Paulo, Brazil, {futorny@ime.usp.br}
}
\and
Tetiana Rybalkina\thanks{Institute of Mathematics, Tereshchenkivska 3, Kiev, Ukraine, {rybalkina\_t@ukr.net}}
\and
Vladimir~V.~Sergeichuk\thanks{Institute of Mathematics, Tereshchenkivska 3, Kiev, Ukraine, {sergeich@imath.kiev.ua}}}

\date{}
 \maketitle

\begin{abstract}
 Let $\cal A$ and $\cal B$ be two systems consisting of the same vector spaces $\CC^{n_1},\dots,\CC^{n_t}$ and bilinear or sesquilinear forms $A_i,B_i:\CC^{n_{k(i)}}\times\CC^{n_{l(i)}}\to\CC$, for $i=1,\dots,s$. We prove that $\cal A$ is transformed to $\cal B$ by homeomorphisms within $\CC^{n_1},\dots,\CC^{n_t}$ if and only if $\cal A$  is transformed to $\cal B$ by linear bijections within  $\CC^{n_1},\dots,\CC^{n_t}$.
 
{\it AMS classification:} 15A21, 37C15

{\it Keywords:}
Topological classification; Bilinear and sesquilinear forms

\end{abstract}

\section{Introduction}

Two bilinear or sesquilinear forms $A,B:\CC^n\times \CC^n\to\CC$ are \emph{topologically equivalent} if there exists a homeomorphism (i.e., a continuous bijection whose inverse is also a continuous bijection)
$\varphi :\CC^n\to\CC^n$ such that $A(u,v)=B(\varphi u,\varphi v)$ for all $u,v\in\CC^n$. We prove that \emph{two forms are topologically equivalent if and only if they are equivalent}; we extend this statement to arbitrary systems of forms.
Therefore, the  canonical matrices of bilinear and sesquilinear forms given in \cite{hor-ser_can} are also their canonical matrices with respect to topological equivalence.

Two linear operators $A,B:\CC^n\to \CC^n$ are \emph{topologically equivalent} if there exists a homeomorphism
$\varphi :\CC^n\to\CC^n$ such that $B(\varphi u)=\varphi (A u)$, for all $u\in\CC^n$.
The problem of classifying linear operators up to topological equivalence is still open. It is solved by Kuiper and Robbin \cite{Kuip-Robb, Robb} for linear operators without
eigenvalues that are roots of unity. It is studied for arbitrary operators in
\cite{Capp-conexamp, Capp-2th-nas-n<=6,Capp-big-n<6,h-p,h-p1} and other papers.
The fact that the problem of topological classification of forms is incomparably simpler is very unexpected for the authors; three of them  only reduce it to the nonsingular case in \cite{fon}.

Hans Schneider \cite{schn} studies the topological space $H^n_r$ of $n\times n$ Hermitian matrices of rank $r$ and proves that its connected components coincide with the *congruence classes. The closure graphs for the congruence classes of $2\times 2$ and $3\times 3$ matrices and for the *congruence classes of $2\times 2$ matrices are constructed in \cite{dmy,fun 2x2}.
The problems of topological classification of matrix pencils and chains of linear mappings are studied in \cite{f-r-s,ryb+ser}.

\section{Form representations of mixed graphs}

Let $G$ be a \emph{mixed graph}; that is, a graph that may have undirected and directed edges (multiple edges and loops are allowed);  let $1,\dots,t$ be its vertices. Its \emph{form representation} $\cal A$ of dimension $\underline n=(n_1,\dots,n_t)$ is given by assigning to each vertex $i$ the vector space $\CC^{n_i}:=\CC\oplus\dots\oplus\CC$ ($n_i$ times), to each undirected edge $\alpha :i\lin j$ ($i\le j$) a bilinear form $A_{\alpha }: \CC^{n_i}\times \CC^{n_j}\to\CC$, and to each directed edge $\beta :i\arr j$ a sesquilinear form $A_{\beta }: \CC^{n_i}\times \CC^{n_j}\to\CC$ that is linear in the first argument and semilinear (i.e., conjugate linear) in the second.
Two form representations $\cal A$ and $\cal B$ of dimensions $\underline n$ and $\underline m$ are \emph{topologically isomorphic} (respectively, \emph{linearly isomorphic})
if there exists a
family of homeomorphisms (respectively, linear bijections)
\begin{equation}\label{nbj}
\varphi_1:\CC^{n_1}\to \CC^{m_1},\ \dots,\
\varphi_t:\CC^{n_t}\to \CC^{m_t}
\end{equation}
that transforms $\cal A$ to $\cal B$; that is,
\begin{equation}\label{hyu}
A_{\alpha }(u,v)=B_{\alpha }(\varphi_iu,\varphi_jv),\qquad
u\in\CC^{n_i},\ v\in\CC^{n_j}
\end{equation}
for each edge $\alpha :i\lin j$ or $i\arr j$.

\begin{example}
A form representation
\begin{equation}\label{jst}
    {\xymatrix@C=45pt{\mathcal A:&
 {\CC^{n_1}} \save !<-2.5mm,0cm>
\ar@(ul,dl)@{-}_{A_{\alpha }}
\restore
  \ar@{-}@<0.4ex>[r]^{A_{\beta }}
 \ar@{<-}@<-0.4ex>[r]_{A_{\gamma }}
 &{\CC^{n_2}}
 \save !<2.5mm,0cm>
\ar@{<-}@(ur,dr)^{A_{\delta }}
\restore }}
\end{equation}
of dimension $\underline n=(n_1,n_2)$
of the mixed graph
\begin{equation}\label{jsy}
 \xymatrix@C=45pt{G:&
 {1} \ar@(ul,dl)@{-}_{\alpha }
 \ar@{-}@<0.4ex>[r]^{\beta  }
 \ar@<-0.4ex>@{<-}[r]_{\gamma }
 &{2}
\ar@{<-}@(ur,dr)^{\delta} }
\end{equation}
is a system consisting of the
vector spaces $\CC^{n_1},\CC^{n_2}$, bilinear forms $A_{\alpha }: \CC^{n_1}\times \CC^{n_1}\to\CC$, $A_{\beta }: \CC^{n_1}\times \CC^{n_2}\to\CC$, and sesquilinear forms
$A_{\gamma }: \CC^{n_2}\times \CC^{n_1}\to\CC$, $A_{\delta }: \CC^{n_2}\times \CC^{n_2}\to\CC$. Two form representations $\mathcal A$ and $\mathcal B$ are topologically isomorphic (respectively, linearly isomorphic)
if there exist  homeomorphisms (respectively, linear bijections)
$\varphi_1$ and
$\varphi_2$ that transform $\mathcal A$ to $\mathcal B$:
\[
\xymatrix@C=45pt{\mathcal A:&
 {\CC^{n_1}} \save !<-2.5mm,0cm>
\ar@(ul,dl)@{-}_{A_{\alpha }}
\restore
\ar[d]_{\varphi _1}
  \ar@{-}@<0.4ex>[r]^{A_{\beta }}
 \ar@{<-}@<-0.4ex>[r]_{A_{\gamma }}
 &{\CC^{n_2}}
 \save !<2.5mm,0cm>
\ar@{<-}@(ur,dr)^{A_{\delta }}
\restore
\ar[d]^{\varphi _2}
                 \\
\mathcal B:&{\CC^{m_1}} \save !<-2.5mm,0cm>
\ar@(ul,dl)@{-}_{B_{\alpha }}
\restore
  \ar@{-}@<0.4ex>[r]^{B_{\beta }}
 \ar@{<-}@<-0.4ex>[r]_{B_{\gamma }}
 &{\CC^{m_2}}
 \save !<2.5mm,0cm>
\ar@{<-}@(ur,dr)^{B_{\delta }}
\restore
}
\]
\end{example}

\begin{theorem}\label{ttk}
Two form representations of a mixed graph are topologically isomorphic if and only if they are linearly isomorphic.
\end{theorem}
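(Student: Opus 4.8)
The forward implication is immediate, since every linear bijection is a homeomorphism. For the converse, suppose a family of homeomorphisms $\varphi_1,\dots,\varphi_t$ as in \eqref{nbj} transforms $\mathcal A$ to $\mathcal B$ via \eqref{hyu}. First I would record two consequences of the fact that each $\varphi_i$ is a homeomorphism. Brouwer's invariance of domain applied to $\varphi_i\colon\CC^{n_i}\to\CC^{m_i}$ forces $n_i=m_i$, so the two representations have the same dimension. Next, for each vertex $i$ let $R_i(\mathcal A)\subseteq\CC^{n_i}$ be the \emph{common radical} at $i$, i.e. the set of vectors annihilated by every form incident to $i$ when placed in the corresponding argument; this is a linear subspace, and since each $\varphi_j$ is a bijection, the equalities \eqref{hyu} give $\varphi_i(R_i(\mathcal A))=R_i(\mathcal B)$. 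As $\varphi_i$ restricts to a homeomorphism between these subspaces, invariance of domain again yields $\dim R_i(\mathcal A)=\dim R_i(\mathcal B)$.

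The heart of the argument is the claim that \emph{$\varphi_i$ is linear modulo the common radical}. Fix $i$ and, writing the forms as matrices, consider an edge having $i$ as its first argument, say $\alpha\colon i\lin j$ with $A_\alpha(u,v)=u^{\mathsf T}A_\alpha v$. Equation \eqref{hyu} reads $u^{\mathsf T}A_\alpha v=(\varphi_iu)^{\mathsf T}B_\alpha\varphi_jv$. For each fixed $v$ the left side is linear in $u$, so $u\mapsto(\varphi_iu)^{\mathsf T}w$ is linear for $w=B_\alpha\varphi_jv$; and because $\varphi_j$ is surjective, $w$ ranges over the whole column space of $B_\alpha$. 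Treating the edges with $i$ as second argument symmetrically (and taking conjugates on the directed edges), I would conclude that $u\mapsto(\varphi_iu)^{\mathsf T}w$ is linear for every $w$ in a subspace $S_i\subseteq\CC^{m_i}$, and a direct check identifies $S_i$ as the annihilator of $R_i(\mathcal B)$. Equivalently, the composition of $\varphi_i$ with the projection $\CC^{m_i}\to\CC^{m_i}/R_i(\mathcal B)$ is a linear map; its kernel is $\varphi_i^{-1}(R_i(\mathcal B))=R_i(\mathcal A)$, so it induces a linear isomorphism $\bar\varphi_i\colon\CC^{n_i}/R_i(\mathcal A)\to\CC^{m_i}/R_i(\mathcal B)$.

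It remains to assemble these quotient maps. Since every form vanishes as soon as one argument lies in a common radical, each $A_\alpha$ and $B_\alpha$ descends to a form on the quotient spaces, and \eqref{hyu} shows at once that the linear isomorphisms $\bar\varphi_i$ transform the quotient system $\bar{\mathcal A}$ to $\bar{\mathcal B}$. Choosing in each $\CC^{n_i}$ a linear complement to $R_i(\mathcal A)$ exhibits $\mathcal A$ as the direct sum of $\bar{\mathcal A}$ and the all-zero representation of dimension $(\dim R_i(\mathcal A))_i$, and likewise for $\mathcal B$; as the zero parts have equal dimensions they are linearly isomorphic by any linear bijections, while $\bar{\mathcal A}$ and $\bar{\mathcal B}$ are linearly isomorphic by the $\bar\varphi_i$. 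Hence $\mathcal A$ and $\mathcal B$ are linearly isomorphic. I expect the only genuine obstacle to be the computation in the middle paragraph---prying linearity out of the mere bijectivity of the $\varphi_j$---together with the bookkeeping needed to handle loops, multiple edges, and the semilinear second argument of the directed edges uniformly; the surrounding reduction to the radical-free (``nonsingular'') case, essentially the reduction carried out in \cite{fon}, is then routine.
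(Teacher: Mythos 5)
Your proof is correct, but it takes a genuinely different route from the paper's. The paper's entire argument rests on one lemma: every homeomorphism $\varphi\colon\CC^n\to\CC^m$ satisfies $n=m$ and carries \emph{some} basis $u_1,\dots,u_n$ to a basis $\varphi u_1,\dots,\varphi u_n$ (proved by induction, perturbing a candidate image vector off the span of $v_1,\dots,v_k$ and using continuity of a $(k+1)\times(k+1)$ minor to pull the perturbation back through $\varphi^{-1}$). Since a bilinear or sesquilinear form is determined by its values on basis vectors, \eqref{hyu} forces the matrix of $A_\alpha$ in the $u$-bases to equal the matrix of $B_\alpha$ in the $v$-bases, and the linear maps $\psi_i\colon u_{ik}\mapsto v_{ik}$ give the linear isomorphism; no radicals, no quotients, no case analysis over edge types. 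Your argument instead extracts linearity of $\varphi_i$ modulo the common radical directly from the form identities, using only the surjectivity of the $\varphi_j$; the verification that $u\mapsto(\varphi_iu)^{\mathsf T}w$ is linear for $w$ spanning $R_i(\mathcal B)^\perp$, that $\pi\circ\varphi_i$ is therefore linear with kernel $\varphi_i^{-1}(R_i(\mathcal B))=R_i(\mathcal A)$, and the final splitting off of the zero summand all check out. What your approach buys is a stronger structural conclusion --- the homeomorphisms are automatically linear modulo the radicals, so in the radical-free case any bijection realizing a topological isomorphism is itself linear, and topology (invariance of domain) enters only to match the dimensions of the degenerate parts; this makes precise why the reduction to the nonsingular case in \cite{fon} captures all the topological content. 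What it costs is length and bookkeeping: the paper's basis-transport lemma treats all edges, loops, and the semilinear arguments uniformly in a few lines, whereas your route needs the edge-by-edge identification of $S_i$ with $R_i(\mathcal B)^\perp$ (including the conjugations for directed edges) spelled out to be complete.
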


\section{Proof of the theorem}

\begin{lemma}\label{vfi}
If $\varphi: \CC^n\to\CC^m$ is a homeomorphism, then $n=m$ and there exists a basis $u_1,\dots,u_n$ of $\CC^n$ such that $\varphi u_1,\dots,\varphi u_n$ is also a basis of $\CC^n$.
\end{lemma}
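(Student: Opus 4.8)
The plan is to establish the two assertions separately. First I would deduce the equality $n=m$ from Brouwer's invariance of dimension: identifying $\CC^n$ with $\mathbb R^{2n}$ and $\CC^m$ with $\mathbb R^{2m}$, a homeomorphism $\varphi$ between them can exist only if $2n=2m$, so $n=m$. This is a standard topological fact, and I would simply invoke it.

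For the second assertion (now with $m=n$), the idea is to argue that ``most'' $n$-tuples of vectors work, via a density argument rather than an explicit construction. I would consider the space $(\CC^n)^n\cong\CC^{n^2}$ of all $n$-tuples $(u_1,\dots,u_n)$, and set
\[
U=\{(u_1,\dots,u_n): u_1,\dots,u_n \text{ is a basis of }\CC^n\}.
\]
Since $(u_1,\dots,u_n)$ is a basis precisely when $\det[u_1\ \cdots\ u_n]\neq 0$, the set $U$ is the complement of the zero locus of a nonzero polynomial, hence an open and dense subset of $(\CC^n)^n$; the analogous set $V$ of basis tuples in the target copy of $\CC^n$ is open and dense for the same reason.

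The key step is to combine these two generic conditions using that $\varphi$ is a homeomorphism. Let $\Phi\colon(\CC^n)^n\to(\CC^n)^n$ be the product homeomorphism $\Phi(u_1,\dots,u_n)=(\varphi u_1,\dots,\varphi u_n)$. Being a homeomorphism, $\Phi$ carries the open dense set $U$ onto an open dense set $\Phi(U)$. The intersection of two open dense subsets of a topological space is again dense, hence nonempty, so I can choose a point $(w_1,\dots,w_n)\in\Phi(U)\cap V$. Writing $w_i=\varphi u_i$ with $(u_1,\dots,u_n)\in U$, the tuple $u_1,\dots,u_n$ is a basis of $\CC^n$ whose image $\varphi u_1,\dots,\varphi u_n$ equals $w_1,\dots,w_n$ and is therefore also a basis, as required.

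I expect the only genuine obstacle to be the equality $n=m$, which rests on invariance of dimension, the single nontrivial topological input. The remainder is soft: the decisive observation is that a homeomorphism preserves the open dense property, so the generic locus where both a tuple and its image are bases cannot be empty.
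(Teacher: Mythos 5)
Your proof is correct, but it follows a genuinely different route from the paper's. For $n=m$ both arguments rest on the same topological input (invariance of dimension; the paper cites \cite[Section 11]{McCl}). For the existence of the basis, however, the paper gives an explicit inductive construction: assuming $u_1,\dots,u_k$ and their images are linearly independent, it picks a candidate $u$, and if $\varphi u$ is linearly dependent on $v_1,\dots,v_k$ it perturbs the image to $v+aw$ with $w$ orthogonal to $v_1,\dots,v_k$, pulls back via $\varphi^{-1}$, and uses continuity of a suitable $(k{+}1)\times(k{+}1)$ minor to find a nonzero $b$ making both the pulled-back tuple and its image independent. Your argument replaces this induction by a single genericity step: the basis locus $U\subset(\CC^n)^n$ is open and dense (complement of the zero set of the determinant), the product map $\Phi=(\varphi,\dots,\varphi)$ is a homeomorphism and hence sends $U$ to an open dense set, and two open dense sets must intersect. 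All the steps you invoke are sound (a continuous surjection sends dense sets to dense sets, and the image of an open set under a homeomorphism is open), so the intersection point furnishes the required basis. Your version is shorter and avoids the perturbation bookkeeping; the paper's version is more elementary in that it only needs continuity of one scalar function $a\mapsto\det N(a)$ rather than the openness/density formalism, and it produces the basis by an explicit finite procedure. Both ultimately exploit the same fact, namely that nonvanishing of a determinant is an open, generic condition.
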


\begin{proof}
Let $\varphi: \CC^n\to\CC^m$ be a homeomorphism. By
\cite[Section 11]{McCl}, $n=m$. Let $k\in\{1,\dots,n-1\}$. Reasoning by induction, we suppose that there exist linearly independent vectors $u_1,\dots,u_k\in\CC^n$  such that $v_1:=\varphi u_1,\dots,v_k:=\varphi u_k$ are also linearly independent. Take $u\in\CC^n$ such that $u_1,\dots,u_k,u$ are linearly independent, and write $v:=\varphi u$. If $v_1,\dots,v_k,v$ are linearly independent, we set $u_{k+1}:=u$.

Let
$v_1,\dots,v_k,v$ be linearly dependent. Take a nonzero $w\in\CC^n$ that is orthogonal to $v_1,\dots,v_k$. Then $v_1,\dots,v_k,v+aw$ are  linearly independent for each nonzero $a\in\CC$. Write $u(a):=\varphi ^{-1}(v+aw)$ and consider the matrix $M(a)$ with columns $u_1,\dots,u_k,u(a)$. Since $\rank M(0)=k+1$, there is a $(k+1)\times(k+1)$ submatrix $N(a)$ of $M(a)$ such that $\det N(0)\ne 0$. The determinant of $N(a)$ is a continuous function of $a$. Hence there is a nonzero $b\in\CC$ such that $\det N(b)\ne 0$.
Then $\rank M(b)=k+1$. We take $u_{k+1}:=u(b)$ and obtain linearly independent vectors $u_1,\dots,u_{k+1}$ such that $\varphi u_1,\dots,\varphi u_{k+1}$ are also linearly independent.

We repeat this construction until we obtain the required $u_1,\dots,u_n$.
\end{proof}

\begin{proof}[Proof of the theorem]
Let $\mathcal A$ and $\mathcal B$ be two form representations of dimensions $\underline n$ and $\underline m$  of a mixed graph $G$.

If $\mathcal A$ and $\mathcal B$ are linearly isomorphic, then they are topologically isomorphic since $n_1=m_1,\dots,n_t=m_t$ and each linear bijection $\varphi_i: \mathbb C^{n_i}\to\mathbb C^{n_i}$ is a homeomorphism.

Let $\mathcal A$ and $\mathcal B$ be topologically isomorphic via a family of homeomorphisms \eqref{nbj}.  By the lemma,
$n_1=m_1,\dots,n_t=m_t$, and for each vertex $i\in\{1,\dots,t\}$ there exists a basis $u_{i1},\dots,u_{in_i}$ of $\CC^{n_i}$ such that $v_{i1}:=\varphi u_{i1},\dots,v_{in_i}:=\varphi u_{in_i}$
is also a basis of $\CC^{n_i}$. By \eqref{hyu}, for each edge $\alpha :i\lin j$ or $i\arr j$ we have
\[
A_{\alpha }(u_{ik},u_{jl})=B_{\alpha }(v_{ik},v_{jl}),\qquad k=1,\dots,n_i,\ l=1,\dots, n_j.
\]
Hence the matrix of $A_{\alpha }:\CC^{n_i}\times \CC^{n_j}\to\CC$ in the bases  $u_{i1},\dots,u_{in_i}$ and $u_{j1},\dots,u_{jn_j}$ is equal to the matrix of $B_{\alpha }:\CC^{n_i}\times \CC^{n_j}\to\CC$ in the bases  $v_{i1},\dots,v_{in_i}$ and $v_{j1},\dots,v_{jn_j}$ (each form is fully determined by its values on the basis vectors).

For each vertex $i\in\{1,\dots,t\}$, define the linear bijection $\psi_i :\mathbb C^{n_i}\to\mathbb C^{n_i}$ such that $\psi_i u_{i1}=v_{i1},\dots,\psi_i u_{in_i}=v_{in_i}$.
The form representations $\mathcal A$ and $\mathcal B$
are linearly isomorphic via $\psi_1,\dots,\psi_t$.
\end{proof}

\section*{Acknowledgement}

C.M. da Fonseca was supported by Kuwait University Research Grant SM08/15. V.~Futorny was
supported by  CNPq grant 301320/2013-6 and by
FAPESP grant 2014/09310-5. V.V.~Sergeichuk was supported by FAPESP grant 2015/05864-9.

\end{document}